\documentclass[11pt,reqno]{amsart}

\DeclareMathAlphabet\mathbfcal{OMS}{cmsy}{b}{n}
\usepackage{xcolor}
\usepackage{amssymb}
\usepackage{amscd}
\usepackage{amsfonts}
\usepackage{mathrsfs}
\usepackage{setspace}
\usepackage{version}
\usepackage{mathtools}
\usepackage{textcomp}
\usepackage{stmaryrd}
\usepackage[english]{babel}
\usepackage[colorlinks=true,linkcolor=blue]{hyperref}
\usepackage[nice]{nicefrac}

\newtheorem{theorem}{Theorem}[section]
\newtheorem{lemma}[theorem]{Lemma}

\theoremstyle{definition}

\theoremstyle{remark}
\newtheorem{remark}[theorem]{Remark}
\numberwithin{equation}{section}



\newcommand{\essinf}{{\rm ess.inf}}


\newcommand{\PP}{\mathbb{P}}








\textheight 22.5truecm \textwidth 14.5truecm
\setlength{\oddsidemargin}{0.35in}\setlength{\evensidemargin}{0.35in}

\setlength{\topmargin}{-.5cm}

\begin{document}

	\title[Sharp large deviation estimates  for heavy-tailed extrema]{Sharp large deviation estimates  for heavy-tailed extrema}

	
		\author{Jos\'e M.~Zapata}
	\address{Universidad de Murcia.  Dpto. de Estadística e Investigación Operativa,  30100 Espinardo, Murcia, Spain}
	\email{jmzg1@um.es}
	
\thanks{The author acknowledges the partial support of the Ministerio de Ciencia e Innovación
of Spain in the project PID2022-137396NB-I00, funded by MICIU/AEI/10.13039/501100011033
and by 'ERDF A way of making Europe'.}

		\date{\today}

	\subjclass[2010]{}

\begin{abstract} 
We establish sharp large deviation asymptotics for the maximum order statistic of independent and identically distributed heavy-tailed random variables, valid for all Borel subsets of the right tail. This result yields exact  decay rates for exceedance probabilities at thresholds that grow faster than the natural extreme-value scaling. As an application, we derive the polynomial rate of decay of ruin probabilities in insurance portfolios where insolvency is driven by a single extreme claim.

\smallskip\noindent
 \emph{Key words:} Heavy-tailed distributions; extreme value theory; large deviations; ruin probabilities;
solvency risk.
   
   \mbox{}\\
 		\smallskip
		\noindent \emph{AMS 2020 Subject Classification: 60G70, 60F10, 62G32, 62P05} 
	\end{abstract}

	\maketitle
	
	\setcounter{tocdepth}{1}

\section{Main result}\label{sec:main}
The present work establishes a new large deviation principle in the context of extreme value theory and analyzes its implications for the asymptotic behavior of extreme insurance claims under heavy-tailed loss distributions. 
Let $(X_n)_{n\in\mathbb{N}}$ be independent and identically distributed
 random variables. We assume that the  distribution has a heavy right tail, in the sense
that its survival function $\bar F$ satisfies
\[
\bar F(x) := \mathbb{P}(X_1 > x) = x^{-\alpha} L(x), \qquad x > x_0,
\]
for some $x_0 > 0$, where $\alpha > 0$ and $L$ is a slowly varying function, i.e.,
\[
\lim_{x\to\infty} \frac{L(tx)}{L(x)} = 1,
\qquad \text{for all } t > 0.
\]
This assumption covers a wide range of claim size distributions commonly used
in actuarial science and finance, including the Pareto distribution, the Student-t distribution, the Burr distribution, stable distributions with index $\alpha<2$, and the log-gamma distribution; see
Embrechts et al.~\cite{Embrechts}.

Let $X_{(n)} := \max_{1 \le i \le n} X_i$ 
denote the maximum order statistic, which naturally arises in actuarial
applications such as the modeling of the largest individual claim, extreme portfolio losses,
or catastrophic risk over a fixed observation period.  

A classical result in extreme value theory asserts that there exists a sequence $(a_n)_{n\in\mathbb{N}}$ of positive normalizing constants such that
\[
\frac{X_{(n)}}{a_n}
\;\xrightarrow{d}\;
\Phi_{\alpha},
\]
where $\Phi_{\alpha}(x) = \exp(-x^{-\alpha})$, $x > 0$, is the Fr\'echet distribution with shape parameter $\alpha > 0$. A standard choice for the normalizing sequence $a_n$ is given by 
\begin{equation}\label{eq:an_def}
a_n := F^{\leftarrow}\!\left(1-\frac{1}{n}\right),
\qquad n \in \mathbb{N},
\end{equation}
where $F^{\leftarrow}(u) := \inf\{x \in \mathbb{R} : F(x) \ge u\}$ denotes the left-inverse of the distribution function $F$ of $X_1$. This result traces back to the pioneering work of Fisher and Tippett~\cite{fisher-tippett}
and Gnedenko~\cite{gnedenko}; see also~\cite{Embrechts,resnick} for comprehensive treatments
of heavy-tailed distributions and regular variation. 

While this result characterizes the typical scale of the maximum order statistic through weak convergence, it does not yield information on the  rate of decay of tail probabilities of the form
$$
\mathbb{P}(X_{(n)}>c_n a_n),
$$
when the threshold sequence $(c_n)_{n\in\mathbb{N}}$ increases with $n$. 
Such probabilities correspond to extreme  scenarios in which the maximum order statistic exceeds its natural extreme-value scale by a growing factor. We provide further illustration of this topic and its relevance for capital assessment in insurance in Section \ref{sec:ruin}.

To address this issue, we adopt a large deviation approach (see \cite{dembo}) tailored to the
maximum order statistic. Specifically,  we investigate the large deviations of the rescaled sequence
\begin{equation}\label{eq:normalized}
Z_n
=
\left(\frac{X_{(n)}}{a_n}\right)^{\frac{\alpha}{\log n}},
\end{equation}
where $a_n$ is defined by \eqref{eq:an_def}. 
 We further assume that $X_1$ admits a density $f$ which is non-increasing for sufficiently large values of $x$ (this assumption can actually be relaxed; see Remark~\ref{lem:conditions} below).

Our main result establishes a sharp large deviation principle for the right tail of
the sequence $(Z_n)_{n\in\mathbb{N}}$.

\begin{theorem}\label{thm:main}
Let $(Z_n)_{n\in\mathbb{N}}$ be defined by~\eqref{eq:normalized} under the assumptions above. Then, for every Borel set $A \subset [1, \infty)$,
\begin{equation}\label{eq:LDP}
\lim_{n\to\infty}
\frac{1}{\log n}\,
\log \PP(Z_n \in A)
=
- \essinf_{x \in A} \log x.
\end{equation}
\end{theorem}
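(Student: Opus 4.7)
The plan is to reduce the statement to the single pointwise tail asymptotic
$$\frac{1}{\log n}\log \PP(Z_n > z) \;\longrightarrow\; -\log z \qquad \text{for every } z > 1,$$
which already pins down the rate function $I(x) = \log x$ on $[1,\infty)$. The passage from a single threshold to a general Borel set will come from the absolute continuity of $Z_n$ (inherited from $X_1$) for the upper bound, and from a pointwise lower bound on the density $g_n$ of $Z_n$ for the matching lower bound.

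I would first compute the pointwise tail. Starting from $\PP(Z_n > z) = 1 - F(a_n z^{\log n/\alpha})^n$ and using regular variation $\bar F(x) = x^{-\alpha}L(x)$ together with the identity $n\bar F(a_n) = 1$ (exact because $F$ is continuous), one gets
$$n\bar F\bigl(a_n z^{\log n/\alpha}\bigr) \;=\; \frac{L\bigl(a_n z^{\log n/\alpha}\bigr)}{L(a_n)}\cdot n^{-\log z}.$$
Potter's bounds show that the slowly varying ratio is $n^{o(1)}$ uniformly for $z$ in compact subsets of $[1,\infty)$, hence $n\bar F(a_n z^{\log n/\alpha}) \to 0$ whenever $z > 1$; combined with the expansion $1-(1-p)^n \sim np$ as $np \to 0$, this yields the pointwise asymptotic.

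For the full LDP, set $b := \essinf_{x \in A} x$ with respect to Lebesgue measure. The upper bound is direct: since $Z_n$ is absolutely continuous, $\PP(Z_n \in A) \le \PP(Z_n \ge b) \le \PP(Z_n > b-\ep)$ for any $\ep \in (0, b-1)$ (trivially bounded by $1$ if $b = 1$), and the tail asymptotic followed by $\ep \downarrow 0$ gives $\limsup \le -\log b$. For the lower bound I would compute $g_n$ explicitly by differentiating $F(a_n z^{\log n/\alpha})^n$ and invoking de Haan's monotone density theorem (applicable because $f$ is eventually non-increasing) to substitute $f(x) \sim \alpha \bar F(x)/x$; a short calculation gives
$$g_n(z) \;\sim\; \frac{\log n}{z}\cdot n\bar F\bigl(a_n z^{\log n/\alpha}\bigr) \;=\; n^{-\log z + o(1)}\,\frac{\log n}{z}.$$
By definition of $b$, the set $A \cap [b, b+\ep]$ has strictly positive Lebesgue measure for every $\ep > 0$, so integrating this density estimate yields $\PP(Z_n \in A) \ge c\,\log n \cdot n^{-\log(b+\ep) + o(1)}$, hence $\liminf \ge -\log(b+\ep)$, and sending $\ep \downarrow 0$ closes both bounds.

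The main technical obstacle is the uniform control of the slowly varying correction $L(a_n z^{\log n/\alpha})/L(a_n)$ when $z$ ranges over a bounded subinterval of $[1,\infty)$, because $z^{\log n/\alpha} = n^{\log z/\alpha}$ grows polynomially in $n$ rather than remaining bounded. The usual compact-set uniformity of Karamata's representation is insufficient; the stronger form of Potter's bounds, $L(tx)/L(x) \le (1+\delta)t^\delta$ uniformly in $t \ge 1$ once $x$ is large enough, must be used, with $\delta$ sent to $0$ after taking logarithms and dividing by $\log n$. A minor boundary issue is the case $b = 1$, where the lower bound $\liminf \ge 0$ still has to be checked; this follows from the same density estimate since $n^{-\log(1+\ep)}\log n$ exceeds any fixed constant on the scale $1/\log n$ once $\ep$ is taken small.
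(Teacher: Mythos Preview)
Your proposal is correct, and the lower bound argument is essentially the paper's: establish a uniform (on compacta) asymptotic for $\tfrac{1}{\log n}\log g_n(x)$ via the monotone density / Von Mises relation $f(x)\sim \alpha\bar F(x)/x$ together with Potter bounds, then integrate over a positive-measure piece of $A$ near the essential infimum. Your identification of the uniform Potter bound (for $t\ge 1$, not just compact $t$) as the key technical point is exactly what the paper leans on as well.

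Where you differ is the upper bound, and yours is genuinely simpler. The paper decomposes $\PP(Z_n\in A)\le \PP(Z_n\in A\cap[1,M])+\PP(Z_n>M)$, controls the first term with the uniform \emph{upper} density estimate $g_n(x)\le n^{-I(x)+\ep}$ on $[1,M]$, the second with the tail bound of Lemma~\ref{lem:limsup}, and then sends $M\to\infty$. You instead observe that $I(x)=\log x$ is increasing and $Z_n$ is absolutely continuous, so $A\setminus[b,\infty)$ is Lebesgue-null and $\PP(Z_n\in A)\le\PP(Z_n\ge b)$ directly; the pointwise tail asymptotic at the single point $b$ then finishes. This bypasses the uniform upper density bound entirely --- you need $g_n$ only from below. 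The paper's route is the generic LDP template and would still work if the rate function were non-monotone; your shortcut exploits precisely the monotonicity present here.
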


Theorem~\ref{thm:main} shows that the sequence $(Z_n)_{n\in\mathbb{N}}$
satisfies a large deviation principle with speed $\log n$ and  rate
function $I(x)=\log x$ on $[1,\infty)$. The large deviation principle is \emph{sharp} in the sense that it yields an
exact limit for the logarithmic asymptotics of tail probabilities, rather than
upper and lower bounds as in the classical formulation of large deviation
theory; see, for example, \cite{dembo}. Moreover, the explicit form of the rate
function implies a universal decay behavior that depends only on the tail index
$\alpha$ and is insensitive to the slowly varying component of the underlying
distribution. 
 
The remainder of the paper is organized as follows. In Section~\ref{sec:ruin}, we study the implications of this result for estimating the decay of ruin probabilities for maximum claim portfolios.  Section~\ref{sec:proof} is devoted to the proof of
Theorem~\ref{thm:main}. The paper ends with a conclusion section.

\section{Decay of ruin probabilities for maximum claim portfolios}
\label{sec:ruin}

In solvency regulation and risk management, a key objective is to understand how ruin probabilities behave when capital requirements are increased to protect against extreme but plausible losses. This is particularly relevant for portfolios
exposed to heavy-tailed risks, where total losses may be dominated by a single
exceptionally large claim, as is commonly observed in catastrophe, liability,
and operational risk insurance; see, for example, \cite{Embrechts,mcneil}.

Consider an insurance portfolio consisting of $n$ independent policies, and let
$X_1, \ldots, X_n$ denote the corresponding claim sizes. The insurer charges a
premium $\pi_n$ per policy, so that the total available capital equals $n\pi_n$.
We focus on a ruin mechanism driven by a single catastrophic claim and define
ruin to occur if the largest claim exceeds the total premium income. The
corresponding ruin probability is
\[
\mathrm{RP}_n
=
\mathbb{P}(X_{(n)} > n\pi_n),
\qquad
X_{(n)} := \max_{1 \le i \le n} X_i .
\]

If the premium is chosen at the classical extreme value level
$\pi_n = a_n n^{-1}$, see \eqref{eq:an_def}, then the Fr\'echet limit implies
\[
\mathrm{RP}_n \longrightarrow 1-\Phi_\alpha(1) > 0,
\]
so that the probability of ruin does not vanish as the portfolio size grows.
This regime is therefore insufficient for ensuring asymptotic solvency.

To study the effect of more conservative capital requirements, we consider a
premium of the form
\[
\pi_n = a_n n^{\beta-1},
\qquad \beta > 0,
\]
which corresponds to a polynomial increase in capital relative to the standard
extreme value scaling. In this case, the ruin probability satisfies
\[
\mathrm{RP}_n
=
\mathbb{P}(X_{(n)} > a_n n^\beta)
\longrightarrow 0,
\qquad n \to \infty,
\]
ensuring asymptotic safety. However, classical extreme value theory does not quantify how fast this
probability converges to zero. In contrast, using Theorem~\ref{thm:main}, we obtain a precise
asymptotic rate. Indeed, noting that
\[
X_{(n)} > n\pi_n
\quad\Longleftrightarrow\quad
Z_n > e^{\alpha\beta},
\]
the large deviation principle~\eqref{eq:LDP} yields
\begin{equation}\label{eq:ruin_LDP}
\lim_{n\to\infty}
\frac{1}{\log n}
\log \mathrm{RP}_n
=
- \alpha \beta.
\end{equation}
Equivalently,
\begin{equation}\label{eq:ruin_poly}
\mathrm{RP}_n
=
n^{-\alpha\beta + o(1)},
\qquad n\to\infty.
\end{equation}

Expression~\eqref{eq:ruin_poly} shows that ruin probabilities decay at a
polynomial rate in the portfolio size. The tail index $\alpha$ determines the
sensitivity of solvency to capital stress, while the parameter $\beta$ controls
the degree of conservatism in premium loading.

From an actuarial perspective, this result offers a  tool for capital  assessment, since the large deviation approach yields the correct asymptotic order of ruin probabilities in extreme regimes.

\section{Proof of the main result}\label{sec:proof}
In this section, we provide the proof of the main result Theorem~\ref{thm:main}. 
Let $(X_n)_{n\ge1}$ be i.i.d.\ random variables satisfying the assumptions in Section \ref{sec:main}, and let $X_{(n)}=\max_{1\le i\le n} X_i$. 
Let $Z_n$ be defined by~\eqref{eq:normalized}. Introduce the notation
\[
t_n(x)=a_n x^{\frac{\log n}{\alpha}},
\qquad
I(x)=\log x.
\]

Let $G_n$ and $g_n$ denote the distribution function and density of $Z_n$, respectively. A direct computation yields
\begin{equation}\label{eq:distribution}
G_n(x)=F^n\bigl(t_n(x)\bigr),
\qquad
g_n(x)=n\,a_n\,\frac{\alpha}{\log n}\,
x^{\frac{\log n}{\alpha}-1}\,
F^{\,n-1}\!\bigl(t_n(x)\bigr)\,
f\!\bigl(t_n(x)\bigr).
\end{equation}

The following lemma collects limits related to the normalizing sequence
$(a_n)_{n\in\mathbb{N}}$ that will be used in the proof of the main result.

\begin{lemma}\label{lem:an}
The following limits hold:
\begin{equation}\label{eq:an-log}
\lim_{n\to\infty}\frac{\log a_n}{\log n}=\frac{1}{\alpha}.
\end{equation}
\begin{equation}\label{eq:fan-log}
\lim_{n\to\infty}\frac{\log \bar{F}(a_n)}{\log n}=-1.
\end{equation}
\end{lemma}

\begin{proof}
By \cite[Proposition~0.8(ii)]{resnick}, the tail distribution satisfies
\begin{equation}\label{eq:log-tail}
\lim_{x\to\infty}\frac{\log \bar F(x)}{\log x} = -\alpha.
\end{equation}

We first note that $a_n\to\infty$. Indeed, if $(a_n)$ were bounded, we would have $\bar F(x)=0$ for all
sufficiently large $x$, implying that the distribution has bounded support.
This contradicts \eqref{eq:log-tail}.

Fix $\varepsilon>0$. By \eqref{eq:log-tail}, there exists $x_\varepsilon>0$ such
that for all $x\ge x_\varepsilon$,
\begin{equation}\label{eq:power-bounds}
x^{-(\alpha+\varepsilon)}
\;\le\;
\bar F(x)
\;\le\;
x^{-(\alpha-\varepsilon)}.
\end{equation}

Since $a_n\to\infty$, for $n$ large enough we have $a_n\ge x_\varepsilon$.
By the definition of $a_n$,
\begin{equation}\label{eq:quantile-def}
\bar F(a_n)\le \frac{1}{n}.
\end{equation}
Using the lower bound in \eqref{eq:power-bounds}, we obtain
\[
a_n^{-(\alpha+\varepsilon)} \le \frac{1}{n},
\]
which implies
\begin{equation}\label{eq:lower-bound}
a_n \ge n^{\frac{1}{\alpha+\varepsilon}}.
\end{equation}

Moreover, since $\bar F$ is non-increasing and $a_n$ is defined as an infimum,
we have for every fixed $\varepsilon>0$,
\begin{equation}\label{eq:before-quantile}
\bar F(a_n-\varepsilon)\ge \frac{1}{n},
\end{equation}
for all $n$ sufficiently large. For such $n$, we also have
$a_n-\varepsilon\ge x_\varepsilon$, and applying the upper bound in
\eqref{eq:power-bounds} yields
\[
\frac{1}{n}
\;\le\;
(a_n-\varepsilon)^{-(\alpha-\varepsilon)}.
\]
Hence,
\begin{equation}\label{eq:upper-bound}
a_n
\;\le\;
\varepsilon + n^{\frac{1}{\alpha-\varepsilon}}.
\end{equation}

Combining \eqref{eq:lower-bound} and \eqref{eq:upper-bound}, we obtain
\[
\frac{1}{\alpha+\varepsilon}
\;\le\;
\frac{\log a_n}{\log n}
\;\le\;
\frac{1}{\alpha-\varepsilon}
+
\frac{\log\!\left(1+\varepsilon n^{-\frac{1}{\alpha-\varepsilon}}\right)}{\log n}.
\]
The second term on the right converges to zero as $n\to\infty$. Therefore,
\[
\frac{1}{\alpha+\varepsilon}
\le
\liminf_{n\to\infty}\frac{\log a_n}{\log n}
\le
\limsup_{n\to\infty}\frac{\log a_n}{\log n}
\le
\frac{1}{\alpha-\varepsilon}.
\]
Letting $\varepsilon\downarrow0$ proves \eqref{eq:an-log}.

Finally, it follows from \eqref{eq:log-tail} that
\[
\lim_{n\to\infty}\frac{\log\bar{F}(a_n)}{\log a_n}=-\alpha.
\]
Consequently,
\begin{align*}
\lim_{n\to\infty}\frac{\log\bar{F}(a_n)}{\log n}
&=
\lim_{n\to\infty}
\frac{\log\bar{F}(a_n)}{\log a_n}
\cdot
\frac{\log a_n}{\log n} \\
&=
(-\alpha)\cdot \frac{1}{\alpha}
=
-1,
\end{align*}
where we have used \eqref{eq:an-log}. This proves \eqref{eq:fan-log}.
\end{proof}

The following result is classical in the theory of regularly varying functions and is commonly referred to as the \emph{Potter bounds}; see, for instance, 
\cite[Proposition~0.8(ii), p.~22]{resnick} or \cite[Theorem~1.5.6]{bingham}.

\begin{lemma}\label{lem:Potter} 
For every $\varepsilon>0$ there exists $t_0>0$ such that
\[
(1-\varepsilon)\,x^{-\alpha-\varepsilon}
\;\le\;
\frac{\bar F(tx)}{\bar F(t)}
\;\le\;
(1+\varepsilon)\,x^{-\alpha+\varepsilon},
\]
for all $t\ge t_0$ and all $x\ge x_0$.
\end{lemma}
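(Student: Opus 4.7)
The plan is to reduce the statement to a bound on the slowly varying component $L$ and then apply Karamata's representation theorem. Writing $\bar F(x) = x^{-\alpha} L(x)$, the ratio factors as
$$\frac{\bar F(tx)}{\bar F(t)} = x^{-\alpha}\,\frac{L(tx)}{L(t)},$$
so after multiplying through by $x^{-\alpha}$ it suffices to show that for every $\delta>0$ there exists $t_0$ with
$$(1-\delta)\,x^{-\delta} \;\leq\; \frac{L(tx)}{L(t)} \;\leq\; (1+\delta)\,x^{\delta}, \qquad t \geq t_0,\ x \geq 1.$$

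For this, I would invoke Karamata's representation theorem, which yields measurable functions $c, \eta$ and a constant $a>0$ such that $c(t)\to c_0\in(0,\infty)$, $\eta(u)\to 0$ as $t,u\to\infty$, and
$$L(t) = c(t)\exp\!\left(\int_a^t \frac{\eta(u)}{u}\,du\right).$$
Then $L(tx)/L(t) = (c(tx)/c(t))\exp\!\left(\int_t^{tx}\eta(u)/u\,du\right)$. Given $\delta>0$, choose $u_0$ so that $|\eta(u)|<\delta/2$ for $u\geq u_0$; for $t\geq u_0$ and $x\geq 1$, a direct substitution shows that the exponential factor is sandwiched between $x^{-\delta/2}$ and $x^{\delta/2}$. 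Since $c(s)\to c_0>0$, enlarge $t_0$ so that $c(tx)/c(t)$ is uniformly within $(1-\delta/2,1+\delta/2)$ for all $t\geq t_0$ and $x\geq 1$. Combining these two estimates yields the desired two-sided control on $L(tx)/L(t)$, and multiplying by $x^{-\alpha}$ completes the argument (after relabelling $\delta$ as $\varepsilon$).

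The only delicate point is securing uniformity of the bound on $c(tx)/c(t)$ in $x\geq 1$, but this is immediate once one observes that both arguments $t$ and $tx$ exceed the threshold $t_0$, so that each of $c(tx)$ and $c(t)$ lies close to $c_0$. Beyond this piece of bookkeeping, the proof is a routine consequence of Karamata's representation; no genuinely hard obstacle arises, which is consistent with the lemma being a classical statement and with the proof being cited from the references listed after its statement.
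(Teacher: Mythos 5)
Your proposal is correct, and it is the standard derivation of Potter's bounds. Note that the paper itself offers no proof of this lemma: it is stated as a classical fact with pointers to Resnick and to Bingham--Goldie--Teugels, and your argument via Karamata's representation theorem is essentially the proof found in those references, so there is nothing to compare beyond saying that you have filled in the omitted classical argument correctly. The factorization $\bar F(tx)/\bar F(t)=x^{-\alpha}L(tx)/L(t)$, the bound $\bigl|\int_t^{tx}\eta(u)u^{-1}\,du\bigr|\le(\delta/2)\log x$ for $x\ge1$, and the uniform control of $c(tx)/c(t)$ once both arguments exceed $t_0$ are all sound. One small caveat on scope rather than on your reasoning: the lemma as printed asserts the bound for all $x\ge x_0$, where $x_0$ is the threshold in the tail representation and is not assumed to be $\ge 1$; your sandwich of the exponential factor between $x^{-\delta/2}$ and $x^{\delta/2}$ reverses orientation for $x<1$, where the correct two-sided Potter bound involves $\max\{x^{\delta},x^{-\delta}\}$. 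Since every application in the paper (Lemma~\ref{lem:limsup} and \eqref{eq:potter-tn}) uses only $x\ge1$, this is a defect of the lemma's phrasing, not a gap in your proof; it would be worth either restricting your statement to $x\ge1$ or noting explicitly that you assume $x_0\ge1$.
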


We first establish an upper bound for the right tail probabilities, which corresponds
to the large-deviation upper bound in Theorem~\ref{thm:main}.

\begin{lemma}\label{lem:limsup}
For every \(x \ge 1\),
\[
\limsup_{n\to\infty}\frac{1}{\log n}\log \PP(Z_n>x)\le -I(x).
\]
Recall that $I(x)=\log x$.
\end{lemma}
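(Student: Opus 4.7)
The plan is to reduce everything to a union bound on the maximum combined with the rough logarithmic asymptotic of $\bar F$ already available through regular variation, and Lemma~\ref{lem:an} to control $\log a_n/\log n$.

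First I would rewrite the event $\{Z_n>x\}$ in terms of the maximum: by the definition of $Z_n$ and $t_n(x)$, $\{Z_n>x\}=\{X_{(n)}>t_n(x)\}$. Since the $X_i$ are i.i.d., the elementary estimate $1-(1-u)^n\le nu$ (valid for $u\in[0,1]$) gives
\[
\PP(Z_n>x)=1-F^n(t_n(x))\le n\,\bar F(t_n(x)).
\]
Taking logarithms and dividing by $\log n$ yields
\[
\frac{1}{\log n}\log \PP(Z_n>x)\;\le\;1+\frac{\log \bar F(t_n(x))}{\log n}.
\]

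The main step is to show that $\log \bar F(t_n(x))/\log n\to -(1+\log x)$ as $n\to\infty$. I would write the ratio as the product
\[
\frac{\log \bar F(t_n(x))}{\log n}
=
\frac{\log \bar F(t_n(x))}{\log t_n(x)}\cdot\frac{\log t_n(x)}{\log n}.
\]
Since $t_n(x)=a_n x^{\log n/\alpha}\to\infty$ (using $x\ge1$ and $a_n\to\infty$ from the proof of Lemma~\ref{lem:an}), equation~\eqref{eq:log-tail} shows that the first factor converges to $-\alpha$. For the second factor,
\[
\frac{\log t_n(x)}{\log n}=\frac{\log a_n}{\log n}+\frac{\log x}{\alpha},
\]
which converges to $1/\alpha+\log x/\alpha=(1+\log x)/\alpha$ by Lemma~\ref{lem:an}. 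Multiplying the two limits gives $-(1+\log x)$.

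Substituting this into the displayed bound, I would conclude
\[
\limsup_{n\to\infty}\frac{1}{\log n}\log \PP(Z_n>x)\;\le\;1-(1+\log x)=-\log x=-I(x),
\]
which is the claimed estimate. There is no genuine obstacle here; the only care needed is to ensure $t_n(x)\to\infty$ so that the regular-variation asymptotic~\eqref{eq:log-tail} applies, which is automatic for $x\ge1$.
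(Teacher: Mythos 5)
Your argument is correct. The opening reduction is identical to the paper's: the union-type bound $1-u^n\le n(1-u)$ gives $\PP(Z_n>x)\le n\,\bar F(t_n(x))$. Where you diverge is in estimating $\bar F(t_n(x))$. The paper invokes the Potter bounds (Lemma~\ref{lem:Potter}) to compare $\bar F(t_n(x))$ with $\bar F(a_n)$ and then uses the quantile property $\bar F(a_n)\le 1/n$ to cancel the factor $n$ exactly, yielding $\PP(Z_n>x)\le(1+\varepsilon)x^{\frac{\log n}{\alpha}(-\alpha+\varepsilon)}$. You instead keep the additive $+1$ coming from $\log n/\log n$ and evaluate $\log\bar F(t_n(x))/\log n$ directly via the factorization through $\log t_n(x)$, using the rough logarithmic asymptotic \eqref{eq:log-tail} for the first factor and Lemma~\ref{lem:an} for the second; both factors are genuine limits, so the product converges and the $+1$ is absorbed into $-(1+\log x)$. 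This is legitimate because only logarithmic asymptotics at speed $\log n$ are needed, and the ingredients you use (\eqref{eq:log-tail} and Lemma~\ref{lem:an}) are exactly the facts from which the paper's Lemma~\ref{lem:an} is itself derived, so nothing circular occurs. The trade-off is that the Potter route gives a non-asymptotic multiplicative bound that the paper reuses (in a uniform form) in the proof of Lemma~\ref{lem:uniform}, whereas your route is slightly more elementary but purely asymptotic. The only points requiring care --- that $t_n(x)\to\infty$ so that \eqref{eq:log-tail} applies along the sequence, and that $\log t_n(x)>0$ eventually so the factorization is licit --- you have addressed.
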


\begin{proof}
Fix \(x\ge1\).
Using the elementary identity
\[
1-u^n=(1+u+\cdots+u^{n-1})(1-u), \qquad u\in[0,1],
\]
we obtain the inequality
\[
1-u^n \le n(1-u), \qquad u\in[0,1].
\]
Applying this with \(u=F(t_n(x))\), we obtain
\[
\PP(Z_n>x)
=
1-F(t_n(x))^n
\le
n\,\bar F(t_n(x)).
\]

Since \(x\ge1\), we have \(t_n(x)\ge a_n\to\infty\) as \(n\to\infty\).
Fix \(\varepsilon>0\). By the Potter bounds (Lemma~\ref{lem:Potter}), for all \(n\) sufficiently large,
\[
\bar F(t_n(x))
\le
(1+\varepsilon)\,\bar F(a_n)\,x^{\frac{\log n}{\alpha}(-\alpha+\varepsilon)}.
\]
Since \(\bar F(a_n)\le 1/n\), we deduce
\[
\PP(Z_n>x)
\le
(1+\varepsilon)\,x^{\frac{\log n}{\alpha}(-\alpha+\varepsilon)}.
\]

Taking logarithms, dividing by \(\log n\) , and taking the limit superior yield 
\[
\limsup_{n\to\infty}
\frac{1}{\log n}\log \PP(Z_n>x)
\le
\left(-1+\frac{\varepsilon}{\alpha}\right)\log x.
\]
Letting \(\varepsilon\downarrow0\) completes the proof.
\end{proof}

\begin{lemma}\label{lem:uniform}
The density $g_n$ of $Z_n$ satisfies
\[
\lim_{n\to\infty}\frac{1}{\log n}\log g_n(x)=-I(x),
\]
with convergence uniform for $x\in[1,M]$, for any fixed $M>1$.
\end{lemma}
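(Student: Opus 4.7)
The plan is to work directly with the explicit formula~\eqref{eq:distribution} for $g_n$: I would take logarithms, divide by $\log n$, and analyze the limit of each of the six resulting summands uniformly on $[1,M]$. Expanding yields
\[
\frac{\log g_n(x)}{\log n}=1+\frac{\log a_n}{\log n}+\frac{\log\alpha-\log\log n}{\log n}+\frac{\log x}{\alpha}-\frac{\log x}{\log n}+\frac{(n-1)\log F(t_n(x))}{\log n}+\frac{\log f(t_n(x))}{\log n}.
\]
The deterministic summands are routine: by Lemma~\ref{lem:an} the second converges to $1/\alpha$; the third and fifth are $o(1)$ uniformly on $[1,M]$ since $|\log x|\le\log M$; the fourth contributes $\log x/\alpha$.

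For the $F$-term I would combine the elementary estimate $\log(1-u)=-u+O(u^2)$ near $u=0$ with the Potter bound (Lemma~\ref{lem:Potter}) applied at $t=a_n$ and ratio $x^{\log n/\alpha}\ge1$: together with $\bar F(a_n)\le 1/n$, this yields
\[
n\bar F(t_n(x))\le(1+\varepsilon)\,x^{-\log n(1-\varepsilon/\alpha)},
\]
which is uniformly bounded (indeed vanishing for $x>1$) on $[1,M]$. Hence $(n-1)\log F(t_n(x))$ remains bounded in $n$ and $x$, and its quotient with $\log n$ vanishes uniformly.

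The main obstacle is the $f$-term $\log f(t_n(x))/\log n$, where the eventual monotonicity of $f$ plays the decisive role. I would invoke the monotone density theorem (see, e.g., \cite[Theorem~1.7.2]{bingham}), applicable because $f$ is eventually non-increasing and $\bar F$ is regularly varying with index $-\alpha$: it gives $t f(t)\sim\alpha\bar F(t)$ as $t\to\infty$, whence $\log f(t)=\log\bar F(t)-\log t+O(1)$. With $t=t_n(x)$, Lemma~\ref{lem:Potter} together with $\log\bar F(a_n)/\log n\to -1$ (a consequence of~\eqref{eq:log-tail} and Lemma~\ref{lem:an}) yields $\log\bar F(t_n(x))/\log n\to-(1+\log x)$ uniformly on $[1,M]$, while $\log t_n(x)/\log n=\log a_n/\log n+\log x/\alpha\to(1+\log x)/\alpha$, again uniformly. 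Collecting the six contributions,
\[
\frac{\log g_n(x)}{\log n}\to 1+\frac{1}{\alpha}+\frac{\log x}{\alpha}-(1+\log x)\frac{\alpha+1}{\alpha}=-\log x,
\]
uniformly in $x\in[1,M]$, which is the claim. The delicate point is the transfer of the regular variation of $\bar F$ to that of $f$ via the monotone density theorem; without the eventual monotonicity of $f$, this transfer can fail, which clarifies the role of that hypothesis.
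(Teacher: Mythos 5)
Your argument is correct and follows essentially the same route as the paper: the same term-by-term decomposition of $\frac{1}{\log n}\log g_n(x)$, with Lemma~\ref{lem:an} for the normalizing constants, the Potter bounds for $\bar F(t_n(x))$, and the relation $tf(t)\sim\alpha\bar F(t)$ (which the paper obtains as the von Mises condition from the eventual monotonicity of $f$, i.e.\ the same monotone density argument you invoke) for the $f$-term. The only cosmetic differences are that you derive $\log\bar F(a_n)/\log n\to-1$ from~\eqref{eq:log-tail} and Lemma~\ref{lem:an} rather than from $\bar F(a_n)\sim 1/n$, and you handle the $F^{n-1}$ factor via a Taylor expansion of $\log(1-u)$ instead of the inequality $-\log u\le 2(1-u)$; both are sound.
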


\begin{proof}
Recall that the density $f$ of $X_1$ is assumed to be non-increasing for $x$
large enough. Under this assumption, the Von Mises condition holds:
\begin{equation}\label{eq:VonMises}
\lim_{x\to\infty}\frac{x f(x)}{\bar F(x)}=\alpha,
\end{equation}
see \cite[Proposition~1.15(b)]{resnick}. This asymptotic relation will be used
 below.

From \eqref{eq:distribution}, we may write
\[
\frac{1}{\log n}\log g_n(x)
=
T_1(n)+T_2(n,x)+T_3(n,x)+T_4(n,x),
\]
where
\[
T_1(n)
=\frac{1}{\log n}\log\!\left(n\,a_n\,\frac{\alpha}{\log n}\right),\quad
T_2(n,x)
=\left(\frac{1}{\alpha}-\frac{1}{\log n}\right)\log x,
\]
\[
T_3(n,x)
=\frac{n-1}{\log n}\log F\!\bigl(t_n(x)\bigr),\quad
T_4(n,x)
=\frac{1}{\log n}\log f\!\bigl(t_n(x)\bigr).
\]
We analyze each term separately.

By Lemma~\ref{lem:an}, $\log a_n=(1/\alpha+o(1))\log n$, which immediately yields
\[
\lim_{n\to\infty}T_1(n)=1+\frac{1}{\alpha}.
\]

Moreover,
\[
\lim_{n\to\infty}T_2(n,x)=\frac{1}{\alpha}\log x,
\]
with convergence uniform for $x\in[1,M]$.

Next, since $x\ge1$, we have $t_n(x)\ge a_n\to\infty$. As $F$ is non-decreasing,
\[
0\le -\log F(t_n(x))\le -\log F(a_n).
\]
For $u$ sufficiently close to $1$, the inequality $0\le -\log u\le 2(1-u)$ holds.
Using $\bar F(a_n)\le 1/n$, we obtain
\[
-\log F(a_n)\le 2\bar F(a_n)\le \frac{2}{n}.
\]
Consequently,
\[
|T_3(n,x)|
\le
\frac{n-1}{\log n}\frac{2}{n},
\]
which converges to $0$ as $n\to\infty$, uniformly for $x\in[1,M]$.

We now turn to the fourth term,
\[
T_4(n,x)=\frac{1}{\log n}\log f\bigl(t_n(x)\bigr).
\]
Since $f$ is eventually non-increasing, the Von Mises condition
\eqref{eq:VonMises} implies that, for $n$ large enough and all $x\in[1,M]$,
\begin{equation}\label{eq:vm-bounds}
(\alpha-1)\frac{\bar F(t_n(x))}{t_n(x)}
\le
f(t_n(x))
\le
(\alpha+1)\frac{\bar F(t_n(x))}{t_n(x)}.
\end{equation}

Applying iteratively the Potter bounds, we may construct a sequence
$\varepsilon_n\downarrow0$ such that, for all $n$ sufficiently large and all
$x\in[1,M]$,
\begin{equation}\label{eq:potter-tn}
(1-\varepsilon_n)\,
x^{-\frac{\log n}{\alpha}(\alpha+\varepsilon_n)}\,\bar{F}(a_n)
\le \bar{F}(t_n(x))
\le (1+\varepsilon_n)\,
x^{-\frac{\log n}{\alpha}(\alpha-\varepsilon_n)}\,\bar{F}(a_n).
\end{equation}

Combining the upper bounds in \eqref{eq:vm-bounds} and \eqref{eq:potter-tn}, we
obtain
\begin{align*}
\frac{1}{\log n}\log f\bigl(t_n(x)\bigr)
&\le
\frac{\log(\alpha+1)}{\log n}
-\frac{\log a_n}{\log n}
-\frac{1}{\alpha}\log x \\
&\quad
+\frac{1}{\log n}\log\!\left(\frac{1+\varepsilon_n}{n}\right)
-\left(1-\frac{\varepsilon_n}{\alpha}\right)\log x
+\frac{1}{\log n}\log\bar F(a_n).
\end{align*}
Using Lemma~\ref{lem:an}, we deduce that, uniformly for $x\in[1,M]$,
\[
\frac{1}{\log n}\log f\bigl(t_n(x)\bigr)
\le
-\left(1+\frac{1}{\alpha}\right)
-\left(1+\frac{1}{\alpha}\right)\log x
+o(1),
\]
where the $o(1)$ term does not depend on $x$.

Similarly, combining the lower bounds in \eqref{eq:vm-bounds} and
\eqref{eq:potter-tn}, we obtain that, uniformly for $x\in[1,M]$,
\[
\frac{1}{\log n}\log f\bigl(t_n(x)\bigr)
\ge
-\left(1+\frac{1}{\alpha}\right)
-\left(1+\frac{1}{\alpha}\right)\log x
+o(1).
\]
Together, these bounds imply
\[
\lim_{n\to\infty}T_4(n,x)
=
-\left(1+\frac{1}{\alpha}\right)
-\left(1+\frac{1}{\alpha}\right)\log x,
\]
uniformly for $x\in[1,M]$.

Collecting the limits of $T_1$, $T_2$, $T_3$, and $T_4$, we observe that the
constant terms cancel and the remaining contributions combine to give
\[
\lim_{n\to\infty}\frac{1}{\log n}\log g_n(x)
=
-\log x
=
-I(x),
\]
uniformly for $x\in[1,M]$. This completes the proof.
\end{proof}

We finally prove Theorem \ref{thm:main}.

\begin{proof}
Fix a Borel set $A\subset[1,\infty)$ with positive Lebesgue measure; otherwise the statement is trivial.  
Set
\[
I_A:=\essinf_{x\in A} I(x).
\]

Fix $\varepsilon>0$ and define
\[
A_\varepsilon:=\{x\in A:\ I(x)<I_A+\varepsilon\}.
\]
By definition of the essential infimum, $A_\varepsilon$ has positive Lebesgue measure.  
Choose $M>0$ large enough so that $A_\varepsilon\cap[0,M]$ also has positive Lebesgue measure.

By the uniform convergence established in Lemma~\ref{lem:uniform}, there exists $n_0\in\mathbb{N}$ such that
\begin{equation}\label{eq:uniform-density}
n^{-I(x)-\varepsilon}
\le
g_n(x)
\le
n^{-I(x)+\varepsilon},
\qquad
\text{for all } x\in[0,M],\ n\ge n_0.
\end{equation}

We now derive matching lower and upper bounds for the logarithmic asymptotics.

First, for the lower bound, we obtain
\begin{align*}
\liminf_{n\to\infty}\frac{1}{\log n}\log\PP(Z_n\in A)
&\ge
\liminf_{n\to\infty}\frac{1}{\log n}\log\PP(Z_n\in A_\varepsilon\cap[0,M])\\
&=
\liminf_{n\to\infty}\frac{1}{\log n}
\log\int_{A_\varepsilon\cap[0,M]} g_n(x)\,dx\\
&\ge
\liminf_{n\to\infty}\frac{1}{\log n}
\log\int_{A_\varepsilon\cap[0,M]} n^{-I(x)-\varepsilon}\,dx\\
&\ge
\liminf_{n\to\infty}\frac{1}{\log n}
\log\int_{A_\varepsilon\cap[0,M]} n^{-I_A-2\varepsilon}\,dx\\
&=
-I_A-2\varepsilon,
\end{align*}
where we used that $I(x)\le I_A+\varepsilon$ on $A_\varepsilon$ and that the Lebesgue measure of $A_\varepsilon\cap[0,M]$ is positive.

Next, for the upper bound, we decompose
\[
\PP(Z_n\in A)
\le
\PP(Z_n\in A\cap[0,M])+\PP(Z_n>M)
\le
2\max\{\PP(Z_n\in A\cap[0,M]),\PP(Z_n>M)\}.
\]
Taking logarithms and dividing by $\log n$, we obtain
\begin{align*}
\limsup_{n\to\infty}\frac{1}{\log n}\log\PP(Z_n\in A)
\le
\max\Bigg\{
&\limsup_{n\to\infty}\frac{1}{\log n}\log\PP(Z_n\in A\cap[0,M]),\\
&\limsup_{n\to\infty}\frac{1}{\log n}\log\PP(Z_n>M)
\Bigg\}.
\end{align*}

By Lemma~\ref{lem:limsup}, we have
\[
\limsup_{n\to\infty}\frac{1}{\log n}\log\PP(Z_n>M)\le -I(M).
\]
On the other hand, using the upper bound in \eqref{eq:uniform-density},
\begin{align*}
\limsup_{n\to\infty}\frac{1}{\log n}\log\PP(Z_n\in A\cap[0,M])
&\le
\limsup_{n\to\infty}\frac{1}{\log n}
\log\int_{A\cap[0,M]} n^{-I(x)+\varepsilon}\,dx\\
&\le
-I_A+\varepsilon.
\end{align*}

Combining the lower and upper bounds yields
\[
-I_A-2\varepsilon
\le
\liminf_{n\to\infty}\frac{1}{\log n}\log\PP(Z_n\in A)
\le
\limsup_{n\to\infty}\frac{1}{\log n}\log\PP(Z_n\in A)
\le
\max\{-I_A+\varepsilon,\,-I(M)\}.
\]

Finally, letting $M\uparrow\infty$ and then $\varepsilon\downarrow0$, we obtain
\[
\lim_{n\to\infty}\frac{1}{\log n}\log\PP(Z_n\in A)
=
-I_A,
\]
which completes the proof.
\end{proof}

We discuss the assumptions of Theorem \ref{thm:main}.
\begin{remark}\label{lem:conditions}
While  the von Mises condition \eqref{eq:VonMises} holds whenever $f$ is eventually monotone, the proof of Theorem \ref{thm:main} only requires the weaker assumption that the ratio
\begin{equation}\label{eq:ratio}
\left|\frac{x f(x)}{\bar{F}(x)}\right|
\end{equation}
is bounded for sufficiently large $x$.

By \cite[Corollary 1.12]{resnick}, $\bar{F}$ has the representation
$$
\bar{F}(x)=c(x)\exp\left(-\int_ 1^x\frac{\alpha(t)}{t} dt\right).
$$ 
for $x\ge 1$, where $\lim_{x\to\infty}c(x)=c>0$ and $\lim_{x\to\infty}\alpha(x)=\alpha>0$. 

By differentiating the above representation, we get
$$
\frac{x f(x)}{\bar{F}(x)}=\alpha(x)-\frac{xc^\prime(x)}{c(x)}.
$$
Since $c(x)$, $\alpha(x)$ converge to positive constants, they are  bounded for large $x$.
 Consequently, to ensure the boundedness of \eqref{eq:ratio}, it suffices to assume that $|x c'(x)|$ is bounded for $x$ large enough, or equivalently, $c'(x) = O(1/x)$. So the monotonicity of $f$ (or the Von Mises condition) can be replaced by this condition in the proof of Theorem \ref{thm:main} to hold. Note that in the common case where $c(x)$ is constant for large $x$ (so $c'(x)=0$), the ratio simply converges to $\alpha$, recovering the classical von Mises condition \eqref{eq:VonMises}.
\end{remark}

\section{Conclusion}

In this paper, we established a novel sharp large deviation principle for the maximum
of i.i.d.\ heavy-tailed random variables. By introducing a logarithmic
rescaling of the classical extreme-value normalization, we derived tail
probability estimates that are valid on all Borel subsets of the extreme right
tail. The resulting large deviation principle has speed $\log n$ and an
explicit rate function, leading to polynomial decay of extreme tail
probabilities in the sample size $n$.

From an actuarial and risk-management perspective,  the
derived asymptotics yield explicit decay rates for ruin probabilities in models
where insolvency is driven by a single catastrophic claim. The resulting
polynomial rates depend only on the tail index $\alpha$ and are insensitive to
the slowly varying component of the claim size distribution, which is often
difficult to estimate reliably in practice.

The analysis highlights the limitations of classical extreme value
approximations when capital levels increase above their natural
scaling. While the Fr\'echet limit accurately describes typical extremes, it
does not capture the speed at which ruin probabilities vanish under
increasingly conservative capital requirements. The large deviation framework
developed in this paper fills this gap by providing asymptotic estimates that
remain informative in rare-event regimes relevant for stress testing and
capital adequacy assessment.

%

\end{document}